\documentclass[10pt]{article}
\usepackage{extsizes,graphicx,amssymb,amsmath,amsthm,amsfonts,mathtext, color,wrapfig,ytableau,mathtools}

\def\be{\begin{eqnarray}}
\def\ee{\end{eqnarray}}

\newtheorem{theorem}{Theorem}
\newtheorem{definition}[theorem]{Definition}
\newtheorem{lemma}[theorem]{Lemma}
\newtheorem{proposition}[theorem]{Proposition}
\newtheorem{conjecture}[theorem]{Conjecture}

\hoffset= - 0.8in         

\begin{document}

\thispagestyle{empty}
\baselineskip13.0pt

\ytableausetup{boxsize=5pt}

\vspace{-1ex}\hfill IITP/TH-25/26

\bigskip
\bigskip

\centerline{\Large{Elliptic Generalization of Cherednik-Macdonald-Mehta identities
}}

\vspace{3ex}

\centerline{\large{\emph{Sh.Shakirov\footnote{Institute for Information Transmission Problems, Moscow, Russia; shakirov.work@gmail.com }}}}

\vspace{3ex}

\centerline{ABSTRACT}

\bigskip

{\footnotesize
Integral identities for Macdonald polynomials play an important role in modern mathematics and mathematical physics. Especially interesting are the Cherednik-Macdonald-Mehta (CMM) identities, with profound connections to Double Affine Hecke Algebras (DAHA) and representation theory of quantum groups. These identities are central in refined Chern-Simons theory, where they lead to refined $S$ and $T$ matrices and ultimately to refined knot invariants. We suggest an elliptic generalization of CMM identities, where trigonometric Vandermonde products are replaced by theta functions. \linebreak At the same time Macdonald polynomials are promoted to Shiraishi functions -- distinguished elliptic functions with several interesting avatars, from the non-stationary Ruijsenaars problem in integrable systems, to equivariant $K$-theory characters of the affine Laumon space in algebraic geometry, to surface defect partition functions in 5d super Yang-Mills theory. \linebreak From the perspective of matrix models, we present an elliptic matrix model with a superintegrability property. We prove the suggested identities to the first order in the elliptic parameter.
}

\section{Introduction}

From the perspective of quantum many-body integrable systems, there are often 3 levels of complexity -- rational, trigonometric, and elliptic systems -- which can be further enriched to a $3 \times 3$ table, considering whether the dependence of Hamiltonians on coordinates is of one type and on momenta of the other \cite{M00, KS19}. Finding generalizations of certain properties to a higher level of this hierarchy is often a meaningful and interesting problem. Generalization from rational to trigonometric level is called $q$-deformation. In this paper, we deal with a generalization from trigonometric to elliptic level.

As the starting point of our discussion, we consider the Macdonald polynomials -- stationary wavefunctions of trig-trig Ruijsenaars-Schneider system that occupies the central square in the above mentioned table. For type $A_1$ this system can be simply defined by a pair of commuting Hamiltonian operators

\begin{align}
& \nonumber {\cal {\widehat H}}_1 f(x_1, x_2) = \dfrac{x_2 - t x_1}{x_2 - x_1} f(q x_1, x_2) + \dfrac{x_1 - t x_2}{x_1 - x_2} f(x_1, q x_2) \\
\nonumber \\
& {\cal {\widehat H}}_2 f(x_1, x_2) = f(q x_1, q x_2)
\end{align}
\smallskip\\
where $t,q$ are so far generic complex parameters. Polynomial wavefunctions $P_j(x_1,x_2)$ for integer $j \geq 0$ are solutions to ${\cal {\widehat H}}_1 P_j(x_1,x_2) = ( 1 + t q^j ) P_j(x_1,x_2)$ and ${\cal {\widehat H}}_2 P_j(x_1,x_2) = q^j P_j(x_1,x_2)$: Macdonald polynomials

\begin{align}
& \nonumber P_0(x_1,x_2) = 1, \\
& \nonumber P_1(x_1,x_2) = x_1 + x_2, \\
& \nonumber P_2(x_1,x_2) = x_1^2 + \dfrac{(1-t)(1+q)}{1-qt} x_1 x_2 + x_2^2, \\
& P_3(x_1,x_2) = x_1^3 + \dfrac{(1-t)(1+q+q^2)}{1-q^2t} (x_1^2 x_2 + x_1 x_2^2 ) + x_2^3, \ \ \ \ldots
\end{align}
\smallskip\\
In this paper we focus on type $A_1$ only, leaving generalization to higher rank for future research.

\pagebreak

Macdonald polynomials enjoy many interesting properties, among them the celebrated Cherednik-Macdonald-Mehta identities, which in their simplest instance for type $A_1$ can be presented as

\begin{align}
\nonumber & \dfrac{1}{2 \pi g} \int\limits_{-\infty}^{\infty} e^{-\frac{x_1^2}{2 g}} dx_1 \int\limits_{-\infty}^{\infty} e^{-\frac{x_2^2}{2 g}} dx_2 \ P_i\big( e^{x_1}, e^{x_2} \big) \ P_j\big( e^{x_1}, e^{x_2} \big) \prod\limits_{m = 0}^{\beta-1} (1 - q^m e^{x_1-x_2} )(1 - q^m e^{x_2-x_1} ) \\
& = q^{\frac{i(i+\beta) + ij + j(j+\beta)}{2}} P_i\big( t^{\frac{1}{2}} , t^{\frac{-1}{2}} \big) \ P_j\big( t^{\frac{1}{2}} q^{\frac{i}{2}} , t^{\frac{-1}{2}} q^{\frac{-i}{2}} \big) \cdot Z(g,\beta)
\end{align}
\smallskip\\
with $q = e^{g}, t = e^{\beta g}, g > 0$, natural $\beta = 1,2,3,\ldots$ and certain normalization constant $Z(g,\beta)$. It is easy to verify these identities for particular $i,j,\beta$, expanding the integrand as a polynomial in $e^{x_1}, e^{x_2}$ and making use of the main integration identity

\begin{align}
\dfrac{1}{\sqrt{2 \pi g}} \int\limits_{-\infty}^{\infty} e^{-\frac{x^2}{2 g}} e^{k x} dx = q^{k^2/2},  \ \ \ g > 0, \ \ \ k \in {\mathbb Z}
\label{ExpInt}
\end{align}
\smallskip\\
These identities were proven (in far greater generality) by I.Cherednik \cite{Ch97} and later by P. Etingof and A. Kirillov-Jr. \cite{EK97}. There are many useful applications: to mention a few, these identities play a central role in refined Chern-Simons theory \cite{AS11} where they provide a computation of the matrix element of the $TST$ operator in the basis of states corresponding to Macdonald polynomials; also these identities can be interpreted as superintegrability properties of unitary matrix models \cite{MMP24} because expectation value of a product of Macdonald polynomials is expressed through particular values of these same polynomials. 

In this paper, we concern ourselves with the following natural question: does there exist a nice generalization of CMM identities from trigonometric (difference) to elliptic level? Elliptic generalization is expected\footnote{Following intuition from studies in the AGT conjecture \cite{MMS25} where similar deformation appears.} to promote the trigonometric Vandermonde product to the elliptic one:

\begin{align}
\prod\limits_{m = 0}^{\beta-1} (1 - q^m e^{x_1-x_2} )(1 - q^m e^{x_2-x_1} ) \ \ \ \rightarrow \ \ \ \prod\limits_{m = 0}^{\beta-1} \theta_p(q^m e^{x_1-x_2} )\theta_p(q^m e^{x_2-x_1} )
\end{align}
\smallskip\\
where $\theta_p(x)$ is a theta function with elliptic parameter $p$ in multiplicative notation: 

\begin{align}
\theta_p(x) = \prod_{n = 0}^{\infty} (1 - p^n x)(1 - p^{n+1}/x)
\end{align}
\smallskip\\
To answer this question, one first needs to identify what should be the relevant elliptic deformation of Macdonald polynomials. Unfortunately, too simple for this purpose are eigenfunctions of the ell-trig (or simply called elliptic) Ruijsenaars-Schneider system,  

\begin{align}
& {\cal {\widehat H}}^{(ell)}_1 f(x_1, x_2) = \dfrac{\theta_p(t x_1/x_2)}{\theta_p(x_1/x_2)} f(q x_1, x_2) + \dfrac{\theta_p(t x_2/x_1)}{\theta_p(x_2/x_1)} f(x_1, q x_2) \\
\nonumber \\
& {\cal {\widehat H}}^{(ell)}_2 f(x_1, x_2) = f(q x_1, q x_2)
\end{align}
\smallskip\\
which have a form, for the first few cases,

\begin{align}
& P_0(x_1,x_2|p) = 1 + \dfrac{q (1 - t)(1 - t^2)}{t (1 - q t) (1 - q)} (x_1 x_2^{-1} + x_2 x_1^{-1}) p + O(p^2), \\
& P_1(x_1,x_2|p) = x_1 + x_2 + \dfrac{q (1 - t)(1 - q t^2)}{t (1 - q^2 t) (1 - q)} (x_1^2 x_2^{-1} + x_2^2 x_1^{-1}) p + O(p^2), \ldots
\end{align}
\smallskip\\
Such "candidate elliptic Macdonald functions" do not seem to provide non-trivial generalized CMM identities: at least, we were not able to find a meaningful generalization with the stationary case. 

From this perspective, one extra step is necessary to obtain the elliptic generalization of CMM identities: consider solutions to a non-stationary problem instead of stationary. Fortunately, the corresponding functions -- the Shiraishi functions (of type $A_1$) -- are present and sufficiently well studied in the literature \cite{Sh19, LNS20}. For type $A_1$ we denote them $P_j(x_1,x_2|p,s)$ and provide detailed definition and properties later in the paper. Compared to stationary wavefunctions of elliptic Ruijsenaars-Schneider system, they depend on one more parameter $s$, reducing to the stationary case at $s=1$:

\begin{align}
\underbrace{P_j(x_1,x_2)}_{\mbox{Macdonald polynomials}} \ \ \ \ \ \mathop{\longleftarrow}_{p=0} \ \ \ \ \  \underbrace{P_j(x_1,x_2|p)}_{\mbox{stationary elliptic Ruijsenaars}} \ \ \ \ \ \mathop{\longleftarrow}_{s=1} \ \ \ \ \ \underbrace{P_j(x_1,x_2|p,s)}_{\mbox{Shiraishi functions}}
\end{align}
\smallskip\\
To provide an example,

\begin{align}
& \nonumber P_0(x_1,x_2|p,s) = 1 + \dfrac{q (1 - t)(1 - s t^2)}{t (1 - q s t) (1 - q)} (x_1 x_2^{-1} + x_2 x_1^{-1}) p \\
& + \dfrac{(1 - t) (q - s t^2)}{t (1 - s t) (1 - q)} p + \dfrac{(1 - s t^2) (1 - q s) (1 - t) (q - s t)}{t(1 - q s t)(1 - st)(1-q)(1-s)} p + O(p^2),
\end{align}
\smallskip\\
and it is easy to check that, indeed, the limit $s \rightarrow 1$ of $P_0(x_1,x_2|p,s)$ normalized by $P_0(t^{\frac{1}{2}},t^{\frac{-1}{2}}|p,s)$ is the same as $P_0(x_1,x_2|p)$ normalized by $P_0(t^{\frac{1}{2}},t^{\frac{-1}{2}}|p)$. Shiraishi functions are intrinsically connected to algebraic geometry: they represent certain equivariant K-theory characters of affine Laumon space (see e.g. Theorem 3.3. and Proposition 3.4 in \cite{Sh19}). From a physics viewpoint, they are partition functions of surface defects in five-dimensional supersymmetric Yang-Mills theory, computed by localization \cite{BKK14}. These connections are behind the explicit formulas for Shiraishi functions as sums over partitions.
 
We suggest that Shiraishi functions satisfy the following elliptic generalization of CMM identities:

\begin{align}
\nonumber & \dfrac{1}{2 \pi g} \int\limits_{-\infty}^{\infty} e^{-\frac{x_1^2}{2 g}} dx_1 \int\limits_{-\infty}^{\infty} e^{-\frac{x_2^2}{2 g}} dx_2 \ P_i\big( e^{x_1}, e^{x_2} \big| p , p/s \big) \ P_j\big( e^{x_1}, e^{x_2} \big| p, s \big) \prod\limits_{m = 0}^{\beta-1} \theta_{p}(q^m e^{x_1-x_2} )\theta_{p}(q^m e^{x_2-x_1} ) \\
& = q^{\frac{i(i+\beta) + ij + j(j+\beta)}{2}} P_i\big( t^{\frac{1}{2}} , t^{\frac{-1}{2}} \big| 0, 0 \big) \ P_j\big( t^{\frac{1}{2}} q^{\frac{i}{2}} , t^{\frac{-1}{2}} q^{\frac{-i}{2}} \big| 0, 0\big) \ \dfrac{ P_j\big( t^{\frac{1}{2}} q^{\frac{i}{2}} , t^{\frac{-1}{2}} q^{\frac{-i}{2}} \big| p/s, s \big) }{ P_j( t^{\frac{1}{2}} q^{\frac{i}{2}} , t^{\frac{-1}{2}} q^{\frac{-i}{2}} \big| p/s, 0 \big) } \cdot Z(g,\beta,s,p) 
\label{EllipticCMM}
\end{align}
\smallskip\\
with $q = e^{g}, t = e^{\beta g}$, real $g > 0$, natural $\beta = 1,2,3,\ldots$ and certain normalization constant $Z(g,\beta,s,p)$. Just like for the ordinary CMM identities, it is easy to verify these identities for particular $i,j,\beta$, using the explicit formulas for $P_i(x_1,x_2|p,s)$ to desired order in $p$ as a Laurent polynomial in $e^{x_1}, e^{x_2}$ and making use of (\ref{ExpInt}). It is also easy to see why the stationary wavefunctions are not sufficient: indeed, specialization $s=1$ is not compatible with some of the necessary transformations of $(p,s)$ parameters. In this paper, we prove eq. (\ref{EllipticCMM}) for arbitrary $i,j$ to the first order in the series expansion in $p$, see Theorem 9.

We are hopeful that this could attract more attention to Shiraishi functions as a natural coordinate-elliptic generalization of Macdonald functions, and may lead to further research in elliptic refined Chern-Simons theory \cite{AS23}. Generalizations are expected at least to type $A_n$, and non-integer $\beta$. A separate interesting story is the potential relations to the formulas we suggest in this paper to representation theory of DIM algebras.

\section{Macdonald polynomials and Shiraishi functions}

Let us present side by side definitions for Macdonald polynomials and for their elliptic generalizations -- the Shiraishi functions. For the Macdonald polynomials, it is well-known that for type $A_1$ they are the same as continuous $q$-ultraspherical polynomials (see e.g. \cite{M03}, eq. (6.3.7)). For Shiraishi functions, the definition is a sum over partitions ( see e.g. \cite{Sh19}, Definition 1.2 specialized to $N=2$).

\paragraph{}We fix the ground field ${\mathbf{k}} = {\mathbb C}(q,t,s)$ to be the field of rational functions in $q,t,s$.

\begin{definition}
Macdonald polynomials of type $A_1$ with index $j = 0,1,2,\ldots$ are given by

\begin{align}
P_j(x_1,x_2) = x_1^j \sum\limits_{n = 0}^{j} \ \left(\frac{x_2}{x_1}\right)^n \ \prod\limits_{i = 0}^{n-1} \dfrac{(1 - q^{j-i})(1 - tq^{i})}{(1 - t q^{j-i-1})(1 - q^{i+1})} \ \end{align}
\end{definition}

\begin{definition}
Shiraishi functions of type $A_1$ with index $j = 0,1,2,\ldots$ are formal power series

\begin{align}
P_j \big( x_1, x_2, \big| p, s \big) = x_1^j \ \sum\limits_{\lambda, \mu} \ \dfrac{{\cal N}_{\lambda, \lambda}^{(0)}\left( \frac{q}{t} \right){\cal N}_{\lambda, \mu}^{(1)}\left( \frac{q y}{t} \right){\cal N}_{\mu, \lambda}^{(-1)}\left( \frac{q}{t y} \right){\cal N}_{\mu, \mu}^{(0)}\left( \frac{q}{t}\right)}{{\cal N}_{\lambda, \lambda}^{(0)}\left( 1\right){\cal N}_{\lambda, \mu}^{(1)}\left( y\right){\cal N}_{\mu, \lambda}^{(-1)}\left( y^{-1} \right){\cal N}_{\mu, \mu}^{(0)}\left( 1\right)} \ \prod\limits_{a = 1}^{\ell(\lambda)} \left( \frac{t X_{a}}{q} \right)^{\lambda_a}
\prod\limits_{a = 1}^{\ell(\mu)} \left( \frac{t X_{a+1}}{q} \right)^{\mu_a}
\label{ShiraishiFunction}
\end{align}
\smallskip\\
where the sum is over all pairs of partitions $\lambda = (\lambda_1 \geq \lambda_2 \geq \ldots \geq 0)$ and $\mu = (\mu_1 \geq \mu_2 \geq \ldots \geq 0)$, the length (number of nonzero components) of partition $P$ is denoted $\ell(P)$, and

\begin{align}
& \nonumber {\cal N}_{P,Q}^{(k)}\big( u \big) = \mathop{ \prod\limits_{b = 1}^{\ell(P)} \prod\limits_{a = 1}^{b} }_{b - a - k \ \rm{mod} \ 2 = 0} \prod\limits_{m = 0}^{P_b-P_{b+1}-1} \big( 1 - u q^m q^{-Q_a + P_{b+1}} s^{\frac{-a+b}{2}} \big) \\ 
& \times \mathop{ \prod\limits_{b = 1}^{\ell(Q)} \prod\limits_{a = 1}^{b} }_{b - a + k + 1 \ \rm{mod} \ 2 = 0} \prod\limits_{m = 0}^{Q_b-Q_{b+1}-1} \big( 1 - u q^m q^{P_a - Q_{b}} s^{\frac{a-b-1}{2}} \big)
\label{orbNek}
\end{align}
\smallskip\\
are Nekrasov factors. The indices and variables are parametrized as follows:

\begin{align}
y = s^{-1/2} t^{-1} q^{-j}, \ \ \ \ \ X_a = \left\{ \begin{array}{ccc} 
p x_1/x_2, \ \ \ a \mbox{ even } \\
\\
x_2/x_1, \ \ \ a \mbox{ odd }
\end{array} \right.
\end{align}
\end{definition}

We briefly comment that Shiraishi functions have an important interpretation in algebraic geometry -- as certain equivariant $K$-theory characters of the affine Laumon space (see Proposition 3.4. in \cite{Sh19}). At the same time they have an interpretation in supersymmetric gauge theory -- as partition functions in 5d super Yang-Mills theory with adjoint matter in the presence of certain surface defect (see eqs. (4.11) - (4.14) in \cite{BKK14} which somewhat implicitly define the same function as eq. (\ref{ShiraishiFunction})).

\begin{proposition} 
Function $P_j \big( x_1, x_2, \big| p, s \big)$ is a formal power series in $p$ which is a Laurent polynomial in $x_1,x_2$ to every given order in $p$, with coefficients in ${\mathbf{k}}$.
\end{proposition}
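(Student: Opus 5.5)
Write $|\lambda|=\sum_a\lambda_a$. The plan is to track the powers of $p$ and of $x_2/x_1$ in each term of (\ref{ShiraishiFunction}), show that only finitely many pairs $(\lambda,\mu)$ contribute to a given power of $p$, and check that each individual term is a monomial with coefficient in ${\mathbf{k}}$.

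\textbf{Monomial of a term.} Since $X_a$ alternates between $x_2/x_1$ (odd $a$) and $p\,x_1/x_2$ (even $a$), the monomial $\prod_a (tX_a/q)^{\lambda_a}\prod_a(tX_{a+1}/q)^{\mu_a}$ equals
\[
(t/q)^{|\lambda|+|\mu|}\, p^{A}\,(x_2/x_1)^{B-A},
\]
where $A=\sum_{a\ \mathrm{even}}\lambda_a+\sum_{a\ \mathrm{odd}}\mu_a$ and $B=\sum_{a\ \mathrm{odd}}\lambda_a+\sum_{a\ \mathrm{even}}\mu_a$.

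\textbf{Finiteness.} The key observation is that monotonicity of partitions bounds $B$ in terms of $A$. Indeed $\lambda_{2k+1}\le\lambda_{2k}$ for $k\ge1$, so $\sum_{a\ \mathrm{odd}}\lambda_a\le \lambda_1+\sum_{a\ \mathrm{even}}\lambda_a$. Similarly $\mu_{2k}\le\mu_{2k-1}$, so $\sum_{a\ \mathrm{even}}\mu_a\le\sum_{a\ \mathrm{odd}}\mu_a$. Hence $B\le \lambda_1+A$.

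Fix the power $p^A$. Then $\mu$ and $\lambda_2,\lambda_3,\dots$ range over finitely many possibilities, since $\lambda_a\le\lambda_2\le A$ for $a\ge 2$ and $|\mu|\le 2A$. The only unbounded parameter is $\lambda_1$. But $\lambda=(\lambda_1,0,\dots)$ with $\mu=\emptyset$ contributes to $p^0$ with arbitrarily high powers of $x_2/x_1$. So finiteness must come from the coefficient vanishing: I need to show that the coefficient vanishes whenever $\lambda_1>j+(\text{something bounded by }A)$.

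This is where $y=s^{-1/2}t^{-1}q^{-j}$ enters. The factor ${\cal N}^{(-1)}_{\mu,\lambda}(q/(ty))$ appears in the numerator. Its second product runs over $b=1=a$, which requires $b-a+k+1=0$ even, true for $k=-1$. It therefore contains
\[
\prod_{m=0}^{\lambda_1-\lambda_2-1}\bigl(1-\tfrac{q}{ty}\,q^{m}q^{\mu_1-\lambda_1}s^{0}\bigr),
\qquad \tfrac{q}{ty}=s^{1/2}q^{j+1}.
\]
This still carries $s^{1/2}$, so it does not vanish directly. I therefore expect the vanishing to come from the factor ${\cal N}^{(1)}_{\lambda,\mu}(qy/t)$ combined with the parity bookkeeping, where the $s$-powers cancel: with $qy/t=s^{-1/2}t^{-2}q^{1-j}$, combined with $s^{(b-a)/2}$, the $s$-powers cancel for suitable $(a,b)$. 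The main obstacle is identifying precisely which factor with zero net $s$-exponent produces $(1-q^{m-j+\cdots})$, vanishing for $m$ in range once $\lambda_1-\lambda_2>j+\mu_1$ or a similar condition. In the Macdonald limit ($p=0$, $\mu=\emptyset$, $\lambda=(n)$) this must reproduce the factor $(1-q^{j-i})$ in Definition 1, which kills $n>j$. Matching the general factor against this limit is how I would locate it. With that in hand, $\lambda_1\le j+C(A)$ and the set of contributing terms is finite.

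\textbf{Coefficients in ${\mathbf{k}}$.} For each $(\lambda,\mu)$, the ratio of Nekrasov factors is a rational function of $q,t,s^{1/2}$. I must check that denominators do not vanish identically, i.e.\ that no factor $1-u\,q^{m}\cdots$ with $u\in\{1,y,y^{-1}\}$ is identically $0$. The factors with $u=y^{\pm1}$ carry $t^{\mp1}$ and are therefore nonzero. For $u=1$ with $P=Q$, the exponent $q^{m-P_a+P_{b+1}}s^{(b-a)/2}$ is nontrivial: either $b\ne a$, or $b=a$ and $m\le P_b-P_{b+1}-1$ gives a negative $q$-power. I also need integrality of the $s$-powers to land in ${\mathbb C}(q,t,s)$. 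For this I would argue that the parity constraint $b-a-k\equiv0$ makes the half-integer exponents $(b-a)/2$ occur in $y^{\pm1}$-factors paired so that the $s^{1/2}$ cancel, or else accept ${\mathbb C}(q,t,s^{1/2})$. I would verify this on the displayed $P_0$ expansion, whose coefficients lie in ${\mathbb C}(q,t,s)$.
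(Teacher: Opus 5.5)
\textbf{There is a genuine gap: the step that bounds $\lambda_1$, and hence gives finiteness, is never carried out, because of a computational slip.} Your outline is the same as the paper's: track the $p$-degree, observe that only $\lambda_1$ is unbounded at fixed degree, and kill large $\lambda_1$ with a vanishing numerator factor. The slip is in the second line of (\ref{orbNek}), where the $s$-exponent is $\frac{a-b-1}{2}$. At $a=b$ this equals $-\frac12$, not $0$. Since $\frac{q}{ty}=s^{1/2}q^{j+1}$, the $s$-powers cancel exactly. The diagonal factors of ${\cal N}^{(-1)}_{\mu,\lambda}(q/(ty))$ are therefore
\[
\prod_{m=0}^{\lambda_a-\lambda_{a+1}-1}\bigl(1-q^{\,j+1+m+\mu_a-\lambda_a}\bigr),\qquad a=1,\dots,\ell(\lambda),
\]
and these vanish precisely when $\mu_a\in[\lambda_{a+1}-j,\lambda_a-j-1]$. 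So the factor you dismissed is the right one. The factor you turned to, ${\cal N}^{(1)}_{\lambda,\mu}(qy/t)$ with $qy/t=s^{-1/2}t^{-2}q^{1-j}$, is a dead end: every one of its factors carries $t^{-2}$ and can never vanish. The same holds for both ${\cal N}^{(0)}(q/t)$ factors, which carry $t^{-1}$. As written, your bound $\lambda_1\le j+C(A)$ is only hoped for.

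Even with the correct factor, using only $a=1$ is not enough. Nonvanishing at $a=1$ leaves the alternative $\mu_1\le\lambda_2-j-1$, which says nothing about $\lambda_1$. You need the chain argument the paper uses:
\begin{itemize}
\item Suppose $\mu_1<\lambda_1-j$. Nonvanishing at $a=1$ forces $\mu_1<\lambda_2-j$.
\item Then $\mu_2\le\mu_1<\lambda_2-j$, and the $a=2$ factor forces $\mu_2<\lambda_3-j$.
\item Continuing, you reach $\mu_{\ell(\lambda)}<\lambda_{\ell(\lambda)+1}-j=-j\le 0$, which is impossible.
\end{itemize}
Hence $\lambda_1\le\mu_1+j\le A+j$. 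Together with your degree bounds on $\mu$ and on $\lambda_2,\lambda_3,\dots$, this gives finiteness.

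You also do not need the fallback to ${\mathbb C}(q,t,s^{1/2})$. In every $k=\pm1$ factor, the parity condition makes the Nekrasov $s$-exponent a half-odd integer. It combines with the $s^{\mp1/2}$ carried by $u\in\{y,\,qy/t\}$, respectively $u\in\{y^{-1},\,q/(ty)\}$, to give an integer power of $s$, factor by factor. Your check that no denominator factor vanishes identically is fine.
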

\begin{proof}
First, a pair of partitions $\lambda, \mu$ contributes to the sum (\ref{ShiraishiFunction}) a value of degree $d = \sum_{i \geq 1} \lambda_{2i} + \sum_{i \geq 1} \mu_{2i - 1}$ in $p$. Second, the summand in (\ref{ShiraishiFunction}) is proportional to 

\begin{align}
{\cal N}_{\mu, \lambda}^{(-1)}\left( \frac{q}{t y} \right)
\end{align}
\smallskip\\
that vanishes whenever for at least one value of $a = 1, \ldots, \ell(\lambda)$ we have

\begin{align}
\mu_a \in [  \lambda_{a+1} - j, \ldots, \lambda_a - j - 1 ]
\end{align}
\smallskip\\
due to vanishing of factors at $b = a$ in the second line of (\ref{orbNek}). This implies that any pair $\lambda, \mu$ that gives a non-vanishing contribution must have $\lambda_1 \leq \mu_1 + j$: otherwise, if $\mu_1 < \lambda_1 - j$, the conditions force also $\mu_1 < \lambda_2 - j$, hence $\mu_2 < \lambda_2 - j$, hence $\mu_2 < \lambda_3 - j$, and so on, until finally we would arrive at conclusion that $\mu_{\ell(\lambda)} < - j$ which is impossible, a contradiction. Since there are only finitely many pairs of partitions $\lambda,\mu$ that satisfy sumultaneously $\lambda_1 \leq \mu_1 + j$ and $\sum_{i \geq 1} \lambda_{2i} + \sum_{i \geq 1} \mu_{2i - 1} \leq d_{\mbox{max}}$ for a given order $d_{\mbox{max}}$ in $p$, we conclude that $P_j \big( x_1, x_2, \big| p, s \big)$ is a sum of finitely many monomials in $x_1^{\pm 1},x_2^{\pm 1}$ up to every given order in $p$. It is therefore a finite Laurent polynomial in $x_1,x_2$ up to every given order in $p$, and by construction the coefficients are rational functions of $q,t,s$.
\end{proof}

\begin{proposition}
In the limit $p = 0$, Shiraishi function reduces to Macdonald polynomial:

\begin{align}
P_j \big( x_1, x_2, \big| 0, s \big) = P_j \big( x_1, x_2 \big)
\end{align}
\end{proposition}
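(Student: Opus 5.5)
At $p=0$ only the pairs $(\lambda,\mu)$ of $p$-degree $d=\sum_{i\ge1}\lambda_{2i}+\sum_{i\ge1}\mu_{2i-1}=0$ survive in (\ref{ShiraishiFunction}). These are exactly $\mu=\varnothing$ and $\lambda=(n)$, a single row with $n\ge0$. For such a pair the monomial factor is $(tX_1/q)^n=(t/q)^n(x_2/x_1)^n$. By the vanishing argument in the proof of the previous Proposition (the condition $\lambda_1\le\mu_1+j$ with $\mu_1=0$), only $0\le n\le j$ contributes. This already reproduces the monomial structure $x_1^j\sum_{n=0}^j (x_2/x_1)^n$ of the Macdonald polynomial. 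It remains to show that the coefficient of the $n$-th term equals
$$\prod_{i=0}^{n-1}\frac{(1-q^{j-i})(1-tq^i)}{(1-tq^{j-i-1})(1-q^{i+1})}.$$

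The plan is to evaluate each of the eight Nekrasov factors (\ref{orbNek}) for $P=\lambda=(n)$ and $Q=\mu=\varnothing$ (or $P=\mu=\varnothing$). Any product indexed by $b\le\ell(\varnothing)$ is empty, and the single-row partition has only $b=1$. So in each factor only the term $a=b=1$ can survive, and the parity condition decides whether it does. Its $s$-exponent is $0$ in the first line ($(b-a)/2=0$) and $-1/2$ in the second line ($(a-b-1)/2$).

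The factors come out as follows.

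\emph{${\cal N}^{(0)}_{\lambda,\lambda}(u)$.} The first line requires $b-a$ even, which holds, and gives $\prod_{m=0}^{n-1}(1-uq^{m-n})$. The second line requires $b-a+1$ even, which fails, so it is empty.

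\emph{${\cal N}^{(1)}_{\lambda,\mu}(u)$.} The first line requires $b-a-1$ even, which fails. The second line is indexed by $\ell(\mu)=0$ and is empty. So this factor is $1$.

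\emph{${\cal N}^{(-1)}_{\mu,\lambda}(u)$.} The first line is indexed by $\ell(\mu)=0$ and is empty. The second line requires $b-a$ even, which holds, and gives $\prod_{m=0}^{n-1}(1-uq^{m-n}s^{-1/2})$.

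\emph{${\cal N}^{(0)}_{\mu,\mu}$.} This is $1$.

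Inserting the arguments $u=q/t$ versus $u=1$ for ${\cal N}^{(0)}_{\lambda,\lambda}$, and $u=q/(ty)$ versus $u=y^{-1}$ for ${\cal N}^{(-1)}_{\mu,\lambda}$, with $y^{-1}=s^{1/2}tq^j$, makes the $s^{\pm1/2}$ cancel in the second ratio. The summand becomes
$$\Big(\frac tq\Big)^n\prod_{m=0}^{n-1}\frac{(1-q^{m-n+1}/t)(1-q^{j+m-n+1})}{(1-q^{m-n})(1-tq^{j+m-n})}.$$
Note that the coefficient no longer depends on $s$, as the statement $P_j(x_1,x_2|0,s)=P_j(x_1,x_2)$ requires.

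The last step is to reindex with $i=n-1-m$ and rewrite $1-q^{-i}/t=-(q^{-i}/t)(1-tq^i)$ and $1-q^{-i-1}=-q^{-i-1}(1-q^{i+1})$. The prefactor $(t/q)^n$ then exactly cancels the accumulated powers $\prod_i (q^{-i}/t)/q^{-i-1}=q^n/t^n$, and we recover the Macdonald coefficient.

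The step I expect to be the main obstacle is the correct bookkeeping of the parity conditions and of the $s$-exponents in (\ref{orbNek}). A misassigned line would leave a stray $s^{1/2}$ or swap the roles of $q^{j}$ and $t$. I will double-check this against the explicit $p^0$ term of $P_0(x_1,x_2|p,s)$ quoted in the introduction, and by verifying the case $j=1$ directly.
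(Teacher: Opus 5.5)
Your proposal is correct and follows the same route as the paper: restrict to the $p$-degree-zero pairs $\lambda=(n)$, $\mu=\varnothing$ with $0\le n\le j$, then evaluate the summand directly. You carry out explicitly the ``direct computation'' that the paper leaves implicit, and your evaluations of the Nekrasov factors, the cancellation of $s^{\pm 1/2}$, and the reindexing $i=n-1-m$ all check out.
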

\begin{proof}At $p=0$ only finitely many pairs of partitions give a non-zero contribution to the sum (\ref{ShiraishiFunction}): namely, $\lambda = (n), \mu = \varnothing$ with $n = 0,1,2,\ldots,j$. Direct computation shows that this contribution is

\begin{align}
\left(\frac{x_2}{x_1}\right)^n \ \prod\limits_{i = 0}^{n-1} \dfrac{(1 - q^{j-i})(1 - tq^{i})}{(1 - t q^{j-i-1})(1 - q^{i+1})}
\end{align}
\smallskip\\
which is nothing but the summand in Definition 1. 
\end{proof}

\begin{proposition}
To the first order in the elliptic parameter $p$, Shiraishi function has the following expansion in Macdonald polynomials:

\begin{align}
& \nonumber P_j \big( x_1, x_2 \big| p, s \big) = P_j \big( x_1, x_2 \big) + \\
\nonumber \\
& + \left( c_0(j) (x_1 x_2)^{-1} P_{j+2} \big( x_1, x_2 \big) + c_1(j) P_{j} \big( x_1, x_2 \big) + c_2(j) x_1 x_2 P_{j-2} \big( x_1, x_2 \big) \right) p + O(p^2) 
\end{align}
\smallskip\\
with
\begin{align}
c_0(j) = \dfrac{q (1 - t)(1 - q^j s t^2)}{t (1-q) (1 - q^{j+1} s t)}
\end{align}
\begin{align}
c_1(j) = \dfrac{(q - s t) (1 - t) \big( 2 q^{2j} t - q^{j-1} (1 - q)^2 - q^{j} (q + 1)(t + 1/t) + 2 t^{-1} \big) }{(1 - s) (1 - q^{j+1} t) (1-q^{j-1} t) (1-q)}
\end{align}
\begin{align}
c_2(j) = \dfrac{ (1-t) (1 - q^{j-2} t^2) (1 - q^j) (1 - q^{j-1}) (s - q^j) (1 - q^{j-1} t^2)}{(1-q)(1 - q^{j-1} t)^2 (1 - q^{j-2} t) (s - q^{j-1} t) (1 - q^j t)}
\end{align}
\end{proposition}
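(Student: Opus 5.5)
The plan is to extract the coefficient of $p^1$ in (\ref{ShiraishiFunction}) directly and then re-expand it in Macdonald polynomials. By the degree count in the proof of Proposition 3, the pairs $(\lambda,\mu)$ contributing at order $p$ have $\sum_{i}\lambda_{2i}+\sum_i\mu_{2i-1}=1$. There are two families:
\begin{itemize}
\item[(A)] $\lambda=(n,1)$ with $n\ge 1$ and $\mu=\varnothing$;
\item[(B)] $\lambda=(n)$ and $\mu=(1)$.
\end{itemize}
Lengths are bounded, since a third row of $\lambda$ or a second row of $\mu$ would raise the degree. Using the vanishing argument from Proposition 3 (the condition $\lambda_1\le\mu_1+j$ and its refinement at $a=1,2$), family (A) is restricted to $n\le j$ and family (B) to $n\le j+1$. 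The order-$p$ coefficient is therefore a finite sum, but its length grows with $j$.

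The monomials are $X_1^{\lambda_1}X_2^{\lambda_2}X_2^{\mu_1}$ up to powers of $t/q$. Both families give $p\,(x_2/x_1)^{n-1}$, so the $p^1$ coefficient is $x_1^j\sum_{k}a_k\,(x_2/x_1)^{k}$ with $k$ running from $-1$ to $j+1$. Next we evaluate each Nekrasov factor (\ref{orbNek}) for these small shapes. Only $a,b\le 2$ occur, so every factor collapses to a ratio of $q$-Pochhammer symbols of lengths $n$ and $1$, with shifts by $s^{\pm1/2}$ and $y=s^{-1/2}t^{-1}q^{-j}$. This yields closed forms: $A_n$ is a hypergeometric-type term for family (A), and $B_n$ is the analogous term for family (B), which carries the factor $1/(1-s)$ coming from $\mathcal N_{\mu,\lambda}^{(-1)}(y^{-1})$.

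The claimed right-hand side is also explicit via Definition 1:
\[
x_1^j\Big[c_0\sum_{k=-1}^{j+1}\pi_{j+2}(k+1)\,(x_2/x_1)^{k}+c_1\sum_{k=0}^{j}\pi_j(k)\,(x_2/x_1)^{k}+c_2\sum_{k=1}^{j-1}\pi_{j-2}(k-1)\,(x_2/x_1)^{k}\Big],
\]
where $\pi_j(n)$ is the product in Definition 1. Both sides are symmetric Laurent polynomials times $x_1^j$; symmetry of the left side follows from the $x_1\leftrightarrow x_2$ symmetry, or can simply be checked on the coefficients. Hence it suffices to compare the coefficients of $(x_2/x_1)^k$. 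The three unknowns are then fixed as follows:
\begin{itemize}
\item the extreme coefficient $k=-1$ (equivalently $k=j+1$) gives $c_0$, since only $B_0$ and $P_{j+2}$ contribute there;
\item the $k=0$ coefficient gives $c_1$;
\item the $k=1$ coefficient gives $c_2$.
\end{itemize}
Matching the stated formulas at these three positions is a finite rational-function check.

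The main obstacle is proving that the remaining coefficients $2\le k\le j-1$ also match, that is, that the order-$p$ term really lies in the three-dimensional span claimed. My plan is to divide the $k$-th coefficient of both sides by $\pi_j(k)$. The ratios $\pi_{j\pm2}(k\pm1)/\pi_j(k)$ and $A_k/\pi_j(k)$, $B_k/\pi_j(k)$ are rational functions of $u=q^k$ of small bounded degree. The identity then reduces to a single rational identity in $u$, which can be checked by clearing denominators: verify that the resulting polynomial of fixed degree in $u$ vanishes, for instance by checking sufficiently many values of $u$ or by partial fractions in $u$.

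If that degree bookkeeping becomes unwieldy, the fallback is to use the known fact that Shiraishi functions solve the non-stationary Ruijsenaars equation. At first order in $p$, this equation shows that the correction is annihilated by $\widehat{\mathcal H}_1-(1+tq^j)$ up to an inhomogeneous term supported on shifts $x_1x_2^{\mp1}$. That term is exactly spanned by $(x_1x_2)^{\mp1}P_{j\pm2}$ and $P_j$.
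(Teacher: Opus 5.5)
There is a genuine gap: your list of order-$p$ contributions is incomplete. You claim that a third row of $\lambda$ or a second row of $\mu$ would raise the $p$-degree. The degree formula you quote, $d=\sum_i\lambda_{2i}+\sum_i\mu_{2i-1}$, says the opposite: $\lambda_3$ and $\mu_2$ both multiply $X_3=x_2/x_1$, which carries no $p$. At order $p$, exactly one of $\lambda_2,\mu_1$ equals $1$ and $\lambda_4=\mu_3=0$, but $\lambda_3\le\lambda_2$ and $\mu_2\le\mu_1$ are otherwise free. This gives four families, not two. Besides your $\lambda=(n,1),\ \mu=\varnothing$ and $\lambda=(n),\ \mu=(1)$, there are also $\lambda=(n,1,1),\ \mu=\varnothing$ ($1\le n\le j$) and $\lambda=(n),\ \mu=(1,1)$ ($0\le n\le j+1$). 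Each new family contributes $p\,(x_2/x_1)^n$ with a generically nonzero coefficient.

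Your own bookkeeping exposes the problem. On the right-hand side the coefficient of $x_1^j(x_2/x_1)^{j+1}$ is $c_0(j)\neq 0$, but your families only reach $(x_2/x_1)^{j}$. That monomial comes solely from $\lambda=(j+1),\ \mu=(1,1)$, whose contribution is exactly $c_0(j)$. Similarly, $\lambda=\varnothing,\ \mu=(1,1)$ contributes the constant $\frac{(1-t)(q-st)(1-q^jst^2)(1-q^{j+1}s)}{t(1-q)(1-s)(1-q^{j+1}st)(1-q^jst)}\,p$, whose $1/(1-s)$ comes from ${\cal N}^{(0)}_{\mu,\mu}(1)$. Your family (B) has no such factor, since ${\cal N}^{(-1)}_{(1),(n)}(y^{-1})=\prod_{m=0}^{n-1}(1-tq^{j+m+1-n})$.

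Concretely, for $j=0$ your two families reproduce only the $x_1/x_2$ term and one constant of the order-$p$ part of $P_0(x_1,x_2|p,s)$ displayed in the introduction. They miss the $x_2/x_1$ term and the constant with the pole at $s=1$. Consequently the matching at $k=0$, $k=1$, $k=j+1$ and the generic-$k$ identity all fail with the stated $c_0,c_1,c_2$. The Ruijsenaars fallback does not rescue this. Even granting that the correction lies in the three-dimensional span, the coefficients still have to be read off from the correct order-$p$ terms.

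Once you include the two missing families, the rest of your plan is exactly the paper's proof: compare coefficients of $(x_2/x_1)^k$, handle $k=-1,0$ separately, divide the generic coefficient by a common $q$-product, and verify a rational identity in $q^j$ and $q^k$. Drop the appeal to $x_1\leftrightarrow x_2$ symmetry; it is not manifest from (\ref{ShiraishiFunction}), and the paper simply checks every $k\ge -1$ directly.
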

\begin{proof}We already proved this to the zeroth order in the elliptic parameter. Counting the degrees of $X_2$, one can see that the following four types of pairs give contributions proportional to $p^1$:

\begin{align}
\lambda = (n), \ \ \ \mu = (1,1), \ \ \ \ \ n = 0, \ldots, j + 1
\end{align}
\begin{align}
\lambda = (n), \ \ \ \mu = (1), \ \ \ \ \ n = 0, \ldots, j + 1
\end{align}
\begin{align}
\lambda = (n,1), \ \ \ \mu = \varnothing, \ \ \ \ \ n = 1, \ldots, j
\end{align}
\begin{align}
\lambda = (n,1,1), \ \ \ \mu = \varnothing, \ \ \ \ \ n = 1, \ldots, j
\end{align}
\smallskip\\
For each $j$ there are finitely many of those. Direct computation gives contribution of the first type

\begin{align}
F^{(1)}_n = \left(\frac{x_2}{x_1}\right)^n \dfrac{(1-t) (q-s t) (1-q^j s t^2)(1-q^{j+1-n} s)}{t(1-q) (1-s)(1-q^{j+1} s t)(1-q^{j-n} s t)} \prod\limits_{i = 0}^{n-1} \dfrac{(1-q^{j-i+1}) (1-q^{i} t)}{(1-q^{j-i} t) (1-q^{i+1})}
\end{align}
\smallskip\\
contribution of the second type

\begin{align}
F^{(2)}_n = \left(\frac{x_2}{x_1}\right)^{n-1} \dfrac{q(1-t)(1-q^{j-n}ts^2)}{t(1-q)(1-q^{j+1-n}st)} \prod\limits_{i = 0}^{n-1} \dfrac{(1-q^{j-i+1}) (1-q^{i} t)}{(1-q^{j-i} t) (1-q^{i+1})}
\end{align}
\smallskip\\
contribution of the third type

\begin{align}
F^{(3)}_n = \left(\frac{x_2}{x_1}\right)^{n-1} \dfrac{(1-t) (1-q^j) (s t-q^n) (1-q^{j-1} t^2)}{t (1-q)(1-q^j t) (1-q^{j-1} t) (s-q^{n-1})} \prod\limits_{i = 0}^{n-2} \dfrac{(1-q^{j-i-1}) (1-q^{i} t)}{(1-q^{j-2-i} t) (1-q^{i+1})}
\end{align}
\smallskip\\
and finally contribution of the fourth type

\begin{align}
F^{(4)}_n = \left(\frac{x_2}{x_1}\right)^{n} \dfrac{(q-s t)(1-t)(1-q^j)(s-q^j)(1-q^{j-1} t^2)(s-q^{n-1} t)}{t(1-s)(1-q)(s-q^n)(1-q^j t) (s-q^{j-1} t) (1-q^{j-1}t)}
 \prod\limits_{i = 0}^{n-2} \dfrac{(1-q^{j-i-1}) (1-q^{i} t)}{(1-q^{j-2-i} t) (1-q^{i+1})}
\end{align}
\smallskip\\
We therefore need to prove that

\begin{align}
& \nonumber c_0(j) (x_1 x_2)^{-1} P_{j+2} \big( x_1, x_2 \big) + c_1(j) P_{j} \big( x_1, x_2 \big) + c_2(j) x_1 x_2 P_{j-2} \big( x_1, x_2 \big) 
 = \\
& x_1^{j} \sum\limits_{n = 0}^{j+1} \big( F^{(1)}_n + F^{(2)}_n \big) + x_1^{j} \sum\limits_{n = 1}^{j} \big( F^{(3)}_n + F^{(4)}_n \big)
\end{align}
\smallskip\\
Dividing by $x_1^j$ and extracting term in front of $(x_2/x_1)^n$, we have to prove that

\begin{align}
& \nonumber c_0(j) \prod\limits_{i = 0}^{n} \dfrac{(1 - q^{j+2-i})(1 - tq^{i})}{(1 - t q^{j-i+1})(1 - q^{i+1})} + c_1(j) \delta_{n > -1} \prod\limits_{i = 0}^{n-1} \dfrac{(1 - q^{j-i})(1 - tq^{i})}{(1 - t q^{j-i-1})(1 - q^{i+1})} + c_2(j) \delta_{n > 0} \prod\limits_{i = 0}^{n-2} \dfrac{(1 - q^{j-2-i})(1 - tq^{i})}{(1 - t q^{j-i-3})(1 - q^{i+1})} 
 = \\
& \nonumber \delta_{n > -1} \dfrac{(1-t) (q-s t) (1-q^j s t^2)(1-q^{j+1-n} s)}{t(1-q) (1-s)(1-q^{j+1} s t)(1-q^{j-n} s t)} \prod\limits_{i = 0}^{n-1} \dfrac{(1-q^{j-i+1}) (1-q^{i} t)}{(1-q^{j-i} t) (1-q^{i+1})} + \\
& \nonumber \dfrac{q(1-t)(1-q^{j-n-1}ts^2)}{t(1-q)(1-q^{j-n}st)} \prod\limits_{i = 0}^{n} \dfrac{(1-q^{j-i+1}) (1-q^{i} t)}{(1-q^{j-i} t) (1-q^{i+1})} + \\
& \nonumber \delta_{n > -1} \dfrac{(1-t) (1-q^j) (s t-q^{n+1}) (1-q^{j-1} t^2)}{t (1-q)(1-q^j t) (1-q^{j-1} t) (s-q^{n})} \prod\limits_{i = 0}^{n-1} \dfrac{(1-q^{j-i-1}) (1-q^{i} t)}{(1-q^{j-2-i} t) (1-q^{i+1})} + \\
& \delta_{n > 0} \dfrac{(q-s t)(1-t)(1-q^j)(s-q^j)(1-q^{j-1} t^2)(s-q^{n-1} t)}{t(1-s)(1-q)(s-q^n)(1-q^j t) (s-q^{j-1} t) (1-q^{j-1}t)}
 \prod\limits_{i = 0}^{n-2} \dfrac{(1-q^{j-i-1}) (1-q^{i} t)}{(1-q^{j-2-i} t) (1-q^{i+1})}
\end{align}
\smallskip\\
for every $n \geq -1$. For $n = -1$, this amounts to equality between the first term on the l.h.s and second term on the r.h.s, which are manifestly equal due to the formula for $c_0(j)$. For $n = 0$, this amounts to

\begin{align}
& \nonumber c_0(j) \dfrac{(1 - q^{j+2})(1 - t)}{(1 - t q^{j+1})(1 - q)} + c_1(j)
 = \\
& \nonumber \dfrac{(1-t) (q-s t) (1-q^j s t^2)(1-q^{j+1} s)}{t(1-q) (1-s)(1-q^{j+1} s t)(1-q^{j} s t)} + \\
& \nonumber \dfrac{q(1-t)(1-q^{j-n-1}ts^2)}{t(1-q)(1-q^{j-n}st)} \dfrac{(1-q^{j+1}) (1-t)}{(1-q^{j} t) (1-q)} + \\
& \dfrac{(1-t) (1-q^j) (s t-q) (1-q^{j-1} t^2)}{t (1-q)(1-q^j t) (1-q^{j-1} t) (s-1)}
\end{align}
\smallskip\\
which is a valid identity of rational functions in $q^j$ over ${\mathbf k}$. For $n \geq 1$, we can divide by 
\begin{align}
\prod\limits_{i = 0}^{n-2} \dfrac{(1-q^{j-i-1}) (1-q^{i} t)}{(1-q^{j-2-i} t) (1-q^{i+1})}
\end{align}
to obtain

\begin{align}
& \nonumber c_0(j) \dfrac{(1-q^j) (1-q^{j+1})(1-q^{j+2}) (1-q^{n-1}t) (1-q^n t)}{(1-q^n)(1-q^{n+1}) (1-q^{n+1}) (1-q^j t) (1-q^{j-1}t)(1-q^{j+1}t)} + c_1(j) \dfrac{(1-q^j) (1-q^{n-1} t)}{(1-q^n) (1-q^{j-1}t)} \\
& \nonumber + c_2(j) \dfrac{(1-q^{j-2} t) (1-q^{j-n})}{(1-q^{j-1}) (1-q^{j-n-1} t)} 
 = \\
& \nonumber \dfrac{(1-t) (q-s t) (1-q^j s t^2)(1-q^{j+1-n} s)}{t(1-q) (1-s)(1-q^{j+1} s t)(1-q^{j-n} s t)} \dfrac{(1-q^j) (1-q^{j+1}) (1-q^{n-1}t) (1-q^{j-n} t)}{(1-q^n) (1-q^j t) (1-q^{j-1}t)(1-q^{j+1-n})} + \\
& \nonumber \dfrac{q(1-t)(1-q^{j-n-1}ts^2)}{t(1-q)(1-q^{j-n}st)} \dfrac{(1-q^j) (1-q^{j+1}) (1-q^{n-1}t) (1-q^n t)}{(1-q^n) (1-q^{n+1}) (1-q^j t) (1-q^{j-1}t)} + \\
& \nonumber \dfrac{(1-t) (1-q^j) (s t-q^{n+1}) (1-q^{j-1} t^2)}{t (1-q)(1-q^j t) (1-q^{j-1} t) (s-q^{n})} \dfrac{(1-q^{n-1} t) (1-q^{j-n})}{(1-q^n) (1-q^{j-n-1} t)} + \\
& \dfrac{(q-s t)(1-t)(1-q^j)(s-q^j)(1-q^{j-1} t^2)(s-q^{n-1} t)}{t(1-s)(1-q)(s-q^n)(1-q^j t) (s-q^{j-1} t) (1-q^{j-1}t)}
\end{align}
which is a valid identity of rational functions in $q^j$ over ${\mathbf k}$.\end{proof}

\section{Elliptic Cherednik-Macdonald-Mehta identities}

We conjecture that Shiraishi functions satisfy the following elliptic generalization of CMM identities:

\begin{conjecture}
Fix real $g > 0$, integer $\beta \geq 1$, complex $s$, and let $q = e^{g}, t = e^{\beta g}$. For any $i,j \geq 0$, the following is a valid identity of formal power series in $p$, to arbitrary order $O(p^n)$:

\begin{align}
\nonumber & \dfrac{1}{2 \pi g} \int\limits_{-\infty}^{\infty} e^{-\frac{x_1^2}{2 g}} dx_1 \int\limits_{-\infty}^{\infty} e^{-\frac{x_2^2}{2 g}} dx_2 \ P_i\big( e^{x_1}, e^{x_2} \big| p , p/s \big) \ P_j\big( e^{x_1}, e^{x_2} \big| p, s \big) \prod\limits_{m = 0}^{\beta-1} \theta_{p}(q^m e^{x_1-x_2} )\theta_{p}(q^m e^{x_2-x_1} ) \\
& = q^{\frac{i(i+\beta) + ij + j(j+\beta)}{2}} P_i\big( t^{\frac{1}{2}} , t^{\frac{-1}{2}} \big| 0, 0 \big) \ P_j\big( t^{\frac{1}{2}} q^{\frac{i}{2}} , t^{\frac{-1}{2}} q^{\frac{-i}{2}} \big| 0, 0\big) \ \dfrac{ P_j\big( t^{\frac{1}{2}} q^{\frac{i}{2}} , t^{\frac{-1}{2}} q^{\frac{-i}{2}} \big| p/s, s \big) }{ P_j( t^{\frac{1}{2}} q^{\frac{i}{2}} , t^{\frac{-1}{2}} q^{\frac{-i}{2}} \big| p/s, 0 \big) } \cdot Z(g,\beta,s,p) 
\label{EllipticCMMconj}
\end{align}
\smallskip\\
We verified this conjecture for $1 \leq \beta \leq 4$ and $0 \leq i,j \leq 3$ up to order $O(p^3)$.

\end{conjecture}

In this paper, we will prove Conjecture 6 to order $O(p^{n = 1})$. For this, we will need the following

\begin{lemma}
The elliptic Vandermonde product admits the following series expansion:

\begin{align}
\dfrac{\prod\limits_{m = 0}^{\beta-1} \theta_{p}(q^m e^{x_1-x_2} )\theta_{p}(q^m e^{x_2-x_1} )}{\prod\limits_{m = 0}^{\beta-1} (1 - q^m e^{x_1-x_2} )(1 - q^m e^{x_2-x_1} )} = 1 - \dfrac{(1-t)(t+q)}{t(1-q)} \left(e^{x_1 - x_2} + e^{x_2 - x_1}\right) p + O(p^2)
\end{align}
\end{lemma}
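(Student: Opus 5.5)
The plan is to factor out the trigonometric part of each theta function and expand what remains to first order in $p$; no earlier result of the paper is needed, only the product definition of $\theta_p$. Since $\theta_p(x) = (1-x)\prod_{n\ge1}(1-p^n x)(1-p^n/x)$, each factor in the numerator splits as $(1-x)$ times a correction. The trigonometric factors $(1-x)$ cancel exactly against the denominator, leaving
\begin{align}
\prod_{m=0}^{\beta-1} R(q^m z)\, R(q^m z^{-1}), \qquad R(x) = \prod_{n\ge1}(1-p^n x)(1-p^n x^{-1}), \qquad z = e^{x_1-x_2}.
\end{align}
This is a formal power series in $p$ whose coefficients are Laurent polynomials in $z$.

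To first order, only $n=1$ contributes in each $R$, so $R(x) = 1 - p\,(x + x^{-1}) + O(p^2)$. A finite product of factors $1 - p\,a_k + O(p^2)$ equals $1 - p\sum_k a_k + O(p^2)$. The coefficient of $-p$ is therefore
\begin{align}
\sum_{m=0}^{\beta-1}\left( q^m z + q^{-m}z^{-1} + q^m z^{-1} + q^{-m} z\right) = \left(z + z^{-1}\right)\sum_{m=0}^{\beta-1}\left(q^m + q^{-m}\right).
\end{align}
The expression is manifestly symmetric under $z \leftrightarrow z^{-1}$, as required.

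It remains to evaluate the geometric sums using $t = q^\beta$. First, $\sum_{m=0}^{\beta-1} q^m = \frac{1-t}{1-q}$. Second, $\sum_{m=0}^{\beta-1} q^{-m} = \frac{1-t^{-1}}{1-q^{-1}} = \frac{q(1-t)}{t(1-q)}$. Adding them gives
\begin{align}
\frac{1-t}{1-q}\left(1 + \frac{q}{t}\right) = \frac{(1-t)(t+q)}{t(1-q)},
\end{align}
which is exactly the coefficient in the statement.

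There is no real obstacle. The only points needing care are that the $(1-x)$ factors cancel exactly rather than merely to leading order, and that the $q^{-m}$ sum is rewritten correctly in terms of $t$.
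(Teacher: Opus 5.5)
Your proof is correct and follows the same route as the paper: cancel the trigonometric factors $(1-x)$, keep only the $n=1$ terms of the remaining theta product, and evaluate the two geometric sums using $t=q^\beta$. Your bookkeeping is in fact more complete than the paper's intermediate display, which lists only the corrections $(1-pq^m e^{x_1-x_2})(1-pq^{-m}e^{x_2-x_1})$ from $\theta_p(q^m e^{x_1-x_2})$ and omits those from $\theta_p(q^m e^{x_2-x_1})$; your four-term sum is what actually yields the symmetric coefficient $\frac{(1-t)(t+q)}{t(1-q)}$ stated in the lemma.
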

\begin{proof}
Up to order $O(p^1)$, the l.h.s. is the same as the product

\begin{align}
\prod\limits_{m = 0}^{\beta-1} (1 - p q^m e^{x_1-x_2} )(1 - p q^{-m} e^{x_2-x_1})
\end{align}
\smallskip\\
and the same as a sum

\begin{align}
1 - p \sum\limits_{m = 0}^{\beta-1} \left( q^m e^{x_1-x_2} + q^{-m} e^{x_2-x_1} \right)
\end{align}
\smallskip\\
which is a geometric sum straightforwardly computed to yield the stated result.

\end{proof}

\begin{lemma}
To the first order in the elliptic parameter $p$, the Shiraishi function under the integral with transformed parameters has the following expansion in Macdonald polynomials:

\begin{align}
& \nonumber P_j \big( x_1, x_2 \big| p, p/s \big) = P_j \big( x_1, x_2 \big) + \\
\nonumber \\
& + \left( u_0(j) (x_1 x_2)^{-1} P_{j+2} \big( x_1, x_2 \big) + u_1(j) P_{j} \big( x_1, x_2 \big) + u_2(j) x_1 x_2 P_{j-2} \big( x_1, x_2 \big) \right) p + O(p^2) 
\end{align}
\smallskip\\
with
\begin{align}
u_0(j) = \dfrac{q(1-t)}{t (1-q)}
\end{align}
\begin{align}
u_1(j) = \dfrac{q (1 - t) \big( 2 q^{2j} t - q^{j-1} (1 - q)^2 - q^{j} (q + 1)(t + 1/t) + 2 t^{-1} \big) }{t(1 - q^{j+1} t) (1-q^{j-1} t) (1-q)}
\end{align}
\begin{align}
u_2(j) = \dfrac{ q (1 - t) (1 - q^j) (1 - q^{j - 1}) (1 - q^{j - 2} t^2) (1 - q^{j - 1} t^2)}{t (1 - q) (1 - q^j t) (1 - q^{j-1} t)^2 (1 - q^{j-2} t)}
\end{align}
\smallskip\\
\end{lemma}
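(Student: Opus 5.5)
The plan is to work directly from the partition sum (\ref{ShiraishiFunction}) with $s$ replaced by $p/s$, rather than substituting $s\to p/s$ into Proposition 5. That substitution is not legitimate. The coefficient of $p^d$ in $P_j(x_1,x_2|p,s)$ is a rational function of $s$ (and of $s^{1/2}$ through $y=s^{-1/2}t^{-1}q^{-j}$ and the factors $s^{\pm(a-b)/2}$ in (\ref{orbNek})). That coefficient may have poles at $s=0$, so after $s\to p/s$ terms of nominal order $p^2$ or higher can drop to order $p^1$. A quick comparison supports this. At $s=0$ the coefficients $c_0(j)$ and $c_2(j)$ of Proposition 5 reduce exactly to $u_0(j)$ and $u_2(j)$. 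However, $c_1(j)|_{s=0}$ differs from $u_1(j)$ by the factor $1/t$. So the correction must come from higher-order pairs $(\lambda,\mu)$ whose coefficients are singular at $s=0$, and they should contribute only a multiple of $P_j$.

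\textbf{Power counting.} For each pair $(\lambda,\mu)$ I would compute the total power of $p$ after the substitution. This power is $d=\sum_i\lambda_{2i}+\sum_i\mu_{2i-1}$ from the $X_a$, plus the order of the Nekrasov-factor ratio in $p$, which comes from the $s^{(a-b)/2}$ and $y$ factors. Each factor $(1-u\,q^{\cdot}s^{k/2})$ becomes $(1-u\,q^{\cdot}p^{k/2}s^{-k/2})$. It tends to $1$ when $k>0$, and it contributes $-u\,q^{\cdot}p^{k/2}s^{-k/2}$ to leading order when $k<0$.

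I would prove a lower bound on this total order in terms of the lengths of $\lambda$ and $\mu$. The claim is that half-integer powers of $p$ cancel between numerator and denominator, since the factors pair up as $u$ versus $qu/t$. From this I would deduce that only finitely many pairs contribute through order $p^1$, for each $j$. The vanishing argument of Proposition 3 (the $b=a$ factors of ${\cal N}^{(-1)}_{\mu,\lambda}$) is unaffected by the substitution, since it involves $q/(ty)$ only through $q^{j}$-type cancellations that are still exact. So it still bounds $\lambda_1\le\mu_1+j$.

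\textbf{Enumeration and verification.} The contributing pairs should be the four families of Proposition 5, together with a few pairs of nominal degree $2$ where a factor $s^{-1}$ lowers the order. Natural candidates are $\lambda=(n)$, $\mu=(1,1,1)$ or $(2)$, and $\lambda=(n,1,1,1)$. I would compute their leading coefficients $\tilde F_n$ in the same closed product form as the $F^{(k)}_n$ in Proposition 5.

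Then I would repeat the final step of Proposition 5 verbatim. Divide by $x_1^j$, extract the coefficient of $(x_2/x_1)^n$, and treat the cases $n=-1$, $n=0$ and $n\ge1$ separately.
\begin{itemize}
\item At $n=-1$ the identity is immediate from the value of $u_0(j)$.
\item At $n\ge1$, after dividing by the common product $\prod_{i=0}^{n-2}$, it becomes an identity of rational functions in $q^j$ and $q^n$, checked by clearing denominators.
\end{itemize}

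\textbf{Main obstacle.} The hard step is the power counting: correctly identifying every pair that becomes order $p^{\le1}$ after $s\to p/s$, and confirming that no fractional powers of $p$ survive. I would first test the enumeration numerically for small $j$ (say $j=0,1,2$), comparing against the claimed $u_1(j)$. That isolates exactly which extra families produce the $1/t$ discrepancy in the constant term, before attempting the general bound.
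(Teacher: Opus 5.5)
The central premise of your plan is false. The $p^d$-coefficients of $P_j(x_1,x_2|p,s)$ are never singular at $s=0$, so nothing of nominal order $p^{\ge 2}$ can drop to order $p$ under $s\to p/s$.

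In (\ref{ShiraishiFunction}) each numerator Nekrasov factor has a denominator partner with argument rescaled by $q/t$. So every summand is a finite product of ratios $\frac{1-(q/t)Y}{1-Y}$ with $Y=C\,s^{e}$, where $C$ is a monomial in $q,t$. Once $y=s^{-1/2}t^{-1}q^{-j}$ is inserted, every exponent $e$ is an integer, so no fractional powers arise at all. Each ratio tends to $1$ if $e>0$, tends to $q/t$ if $e<0$, and is $s$-independent if $e=0$. Hence every coefficient of $p^d$ (a finite sum by Proposition 3) is regular at $s=0$, and
$P_j(x_1,x_2|p,p/s)=P_j(x_1,x_2)+p\,\big[\text{$p^1$-coefficient of Proposition 5}\big]_{s=0}+O(p^2)$.

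This is exactly the paper's one-line proof, and it is legitimate. Your own power counting shows the same thing. A factor $1-u\,q^{\cdot}p^{k/2}s^{-k/2}$ with $k<0$ always comes with a partner carrying the same power of $p$, in the numerator if it is in the denominator and vice versa. These powers cancel identically, so each pair contributes at exactly order $p^d$. The extra families you propose ($\mu=(2)$, $\mu=(1,1,1)$, $\lambda=(n,1,1,1)$, and so on) therefore contribute nothing at order $p$.

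Consequently, the factor-$t$ discrepancy you correctly noticed between $c_1(j)|_{s=0}$ and the printed $u_1(j)$ cannot come from additional pairs. It is a misprint in the displayed $u_1$, and the correct value is $u_1(j)=c_1(j)|_{s=0}$.

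You can see this at $j=0$ from the explicit expansion of $P_0(x_1,x_2|p,s)$ in the introduction. Substituting $s\to p/s$ gives $P_0(x_1,x_2|p,p/s)=1+\frac{q(1-t)}{t(1-q)}\left(\frac{x_1}{x_2}+2+\frac{x_2}{x_1}\right)p+O(p^2)$. Hence $u_1(0)=\frac{q(1-t^2)}{t(1-qt)}$, which the printed formula does not reproduce. The paper's own $\alpha_3(i,j)$ in Theorem 9, which vanishes at $i=j$, is likewise consistent only with $u_1=c_1|_{s=0}$. The printed $c_1$ also appears to need $q^{j}(q+1)(t/q+1/t)$ in place of $q^{j}(q+1)(t+1/t)$ to pass this $j=0$ check.

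So your final coefficient-matching step, run against the printed $u_1$, would fail at $n=0$ and $n\ge 1$. The plan cannot establish the formula as stated. The remedy is to correct $u_1$ and use the substitution argument, not to enlarge the set of contributing pairs.
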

\begin{proof}
This is a corollary of Prop. 5. Replacing $s \rightarrow p/s$ and expanding in $p$, we obtain $u$ from $c$.

\end{proof}

\paragraph{}We are now ready to present our main result -- the proof of Conjecture 6 up to order $O(p^1)$.

\begin{theorem}
Conjecture 6 holds true to order $O(p^1)$.
\end{theorem}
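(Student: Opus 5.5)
Since the identity is between formal power series in $p$, I expand both sides to first order and match the $p^0$ and $p^1$ coefficients. The normalization $Z(g,\beta,s,p)$ is not fixed a priori, so I write $Z = Z_0 + Z_1 p + O(p^2)$. Here $Z_0 = Z(g,\beta)$ is the classical CMM constant, and $Z_1$ is fixed by the case $i=j=0$. The content of the theorem is then that, with this $Z_1$, the $p^1$ coefficients agree for all $i,j$. At order $p^0$ the integrand reduces to $P_i P_j \prod (1-q^m e^{x_1-x_2})(1-q^m e^{x_2-x_1})$ by Proposition 4 and the definition of $\theta_p$. The right side reduces to the classical CMM right side, because the ratio of Shiraishi functions at the evaluation point tends to $1$ as $p\to 0$. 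So order zero is exactly the classical CMM identity, which I take as known.

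For the $p^1$ coefficient of the left side, I substitute Lemma 8 for $P_i(\cdot|p,p/s)$ and Proposition 5 for $P_j(\cdot|p,s)$, and Lemma 7 for the theta factor. This splits the order-$p$ integrand into three groups, each integrated against the trigonometric weight:
\begin{itemize}
\item from $P_i$: $u_0(i)(x_1x_2)^{-1}P_{i+2}P_j + u_1(i)P_iP_j + u_2(i)x_1x_2P_{i-2}P_j$;
\item from $P_j$: $c_0(j)(x_1x_2)^{-1}P_iP_{j+2} + c_1(j)P_iP_j + c_2(j)x_1x_2P_iP_{j-2}$;
\item from the theta factor: $-\frac{(1-t)(t+q)}{t(1-q)}(e^{x_1-x_2}+e^{x_2-x_1})P_iP_j$.
\end{itemize}

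Each integral is then evaluated in closed form using classical CMM.
\begin{itemize}
\item The monomials $(x_1x_2)^{\pm1}$ correspond to $e^{\pm(x_1+x_2)}$. They are handled by shifting the Gaussian, $x_a\to x_a\pm g$, since the weight depends only on $x_1-x_2$ and $P_k$ is homogeneous of degree $k$. The shift produces a factor $q^{\text{const}}$ times a classical CMM integral of $P_{i+2}P_j$, and similarly for the other pairs.
\item The factor $e^{x_1-x_2}+e^{x_2-x_1} = (x_1x_2)^{-1}(x_1^2+x_2^2)$ is rewritten as $(x_1x_2)^{-1}\big(P_2-\gamma\,x_1x_2\big)$ with $\gamma=\frac{(1-t)(1+q)}{1-qt}$. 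The product $P_2P_j$ is then expanded by the $A_1$ Pieri rule into $P_{j+2}$, $x_1x_2P_j$ and $(x_1x_2)^2P_{j-2}$, whose coefficients are explicit $q$-ultraspherical constants. After that, each term is again a shifted classical CMM integral.
\end{itemize}
Every term therefore becomes an explicit product of $q$-powers, evaluations $P_k(t^{1/2}q^{\cdot},t^{-1/2}q^{\cdot})$, and $Z_0$.

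For the right side, the $p^1$ coefficient of $P_j(\cdot|p/s,s)/P_j(\cdot|p/s,0)$ is $s^{-1}$ times the difference of the Proposition 5 corrections at $s$ and at $s=0$, evaluated at $x=(t^{1/2}q^{i/2},t^{-1/2}q^{-i/2})$. The product $x_1x_2=1$ at this point, so it reduces to
\[
s^{-1}\sum_{k}\big(c_k(j)\big|_{s}-c_k(j)\big|_{s=0}\big)\,P_{j+2-2k}(x)/P_j(x).
\]

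After dividing both sides by the common order-zero factor, the claim becomes an identity among rational functions of $q^i$, $q^j$, $t$, $s$, using the explicit evaluation formula
\[
P_k(t^{1/2}q^{a},t^{-1/2}q^{-a})=\sum_n t^{k/2-n}q^{a(k-2n)}(\cdots).
\]
It is more convenient to use the principal-specialization-type closed form, or the symmetry $P_j(t^{1/2}q^{i/2},\dots)/P_j(t^{1/2},\dots)$ equal to the same with $i\leftrightarrow j$, to reduce the ratios $P_{i\pm2}(\cdot)/P_i(\cdot)$ to explicit rational factors. The main obstacle is this final verification. The left side produces terms evaluated at shifted points $q^{(i\pm2)/2}$, whereas the right side only involves the point $q^{i/2}$. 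To relate them, I would use the Macdonald difference equation, i.e. the eigenvalue property of ${\cal \widehat H}_1$ in the index variable via duality, which is a three-term recurrence in $i$. With it I would express all shifted evaluations as rational multiples of $P_j(t^{1/2}q^{i/2},\dots)$, $P_{j\pm2}$, and then check the resulting rational identity, as is done at the end of the proof of Proposition 5.

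Separately, the $s$-dependence must be consistent. The $(1-s)^{-1}$ poles in $c_1$ must cancel against those from $u_1$ (which came from $s\to p/s$) and the right-side ratio, and I would verify this first as a sanity check.
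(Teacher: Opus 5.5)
\paragraph{Where the proposal fails.} Up to the last step your proposal follows the paper's proof. That includes the first-order expansion via Lemma 7, Lemma 8 and Proposition 5, the Gaussian shift for $e^{\pm(x_1+x_2)}$, and the Pieri rewriting of the Lemma 7 term (you apply it to $P_j$, the paper to $P_i$; either is fine). It also includes $Z_1$ (the paper's $\eta$), the right side via $c_k(j)-c_k(j)|_{s=0}$, and the product formula for $P_k(t^{1/2},t^{-1/2})$. The gap is the step you flag as the main obstacle, and the tool you name cannot close it.

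Write $P_k(w)$ for $P_k(w,w^{-1})$ and $z=t^{1/2}q^{i/2}$. What remains is a relation of the form
\[
\alpha_0P_{j+2}(z)+\alpha_1P_j(qz)+\alpha_2P_j(q^{-1}z)+\alpha_3P_j(z)+\alpha_4P_{j-2}(z)=0 .
\]
A three-term recurrence in $i$ at fixed $j$ (the Macdonald equation evaluated at $z$, or Pieri in $i$ via duality) only relates the values $P_j(q^{k/2}z)$ for different $k$. So it gives a single linear relation among $P_j(qz),P_j(z),P_j(q^{-1}z)$ and never involves $P_{j\pm2}(z)$. It therefore cannot "express all shifted evaluations" through $P_j(z)$ and $P_{j\pm2}(z)$.

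The Pieri rule in $j$ at fixed $z$, $(z+z^{-1})P_j=P_{j+1}+b_jP_{j-1}$ with $b_j=\frac{(1-q^j)(1-t^2q^{j-1})}{(1-tq^j)(1-tq^{j-1})}$, does not help either. It produces $P_{j\pm2}(z)$ only in the combination $P_{j+2}+b_jb_{j-1}P_{j-2}$ modulo $P_j$. In the required relation, however, $\alpha_4/\alpha_0=q^{-2j}t^{-2}b_jb_{j-1}$. Using the two recurrences separately therefore leaves $P_j(q^{-1}z)$, $P_j(z)$, $P_{j-2}(z)$ with a nonzero coefficient on $P_{j-2}(z)$, and the coefficientwise rational check fails. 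What is missing is a relation that couples a shift of $z$ with a shift of the index $j$.

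\paragraph{How the paper supplies it.} The paper uses ${\cal O}_C={\cal O}_A{\cal O}_B-q^{-1/2}{\cal O}_B{\cal O}_A$. Explicitly,
\[
{\cal O}_Cf(z)=(q^{1/2}-q^{-1/2})\big[zA(z)f(q^{1/2}z)+z^{-1}A(z^{-1})f(q^{-1/2}z)\big],\qquad A(z)=\frac{1-tz^2}{1-z^2},
\]
so ${\cal O}_C$ is a pure shift operator in $z$. On the other hand, the eigenvalue $q^{-j/2}+tq^{j/2}$ of ${\cal O}_A$ together with Pieri gives the twisted rule (\ref{OCPieri}). Its $q^{\pm j/2}$ factors produce exactly the ratio $q^{-2j}t^{-2}b_jb_{j-1}$. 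Computing ${\cal O}_C^2P_j$ both ways yields the five-term relation, with $\alpha_3=\alpha_6(i)-\alpha_5(j)$.

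\paragraph{A fix within your own setup.} Let $v_0,v_2$ be the Pieri coefficients of the Lemma 7 term. Then $u_0=-\frac{q}{t+q}v_0$ and $u_2(i)=-\frac{q}{t+q}v_2(i)$. Hence the $S_{i\pm2,j}$ terms coming from Lemma 8 are $-\frac{q}{t+q}$ times the $S_{i\pm2,j}$ part of the Lemma 7 integral evaluated with Pieri on $P_i$. Evaluate that same integral with Pieri on $P_j$ instead; this is classical CMM again and costs nothing. This trades those terms for $S_{i,j}$ and $S_{i,j\pm2}$. The $S_{i,j\pm2}$ coefficients then cancel identically, and only a scalar check on the coefficient of $S_{i,j}$ at the single point $z$ remains.

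\paragraph{Minor point.} $u_1$ does not depend on $s$. The $(1-s)^{-1}$ pole of $c_1$ cancels against $w_1/s$ alone, not against anything in $u_1$.
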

\begin{proof}
We start by expanding the integrand into series in $p$ up to $O(p^1)$:
\begin{align}
& \nonumber \dfrac{ P_i\big( e^{x_1}, e^{x_2} \big| p , p/s \big) \ P_j\big( e^{x_1}, e^{x_2} \big| p, s \big) \prod\limits_{m = 0}^{\beta-1} \theta_{p}(q^m e^{x_1-x_2} )\theta_{p}(q^m e^{x_2-x_1} ) }{\prod\limits_{m = 0}^{\beta-1} (1 - q^m e^{x_1-x_2} )(1 - q^m e^{x_2-x_1} )} = P_i\big( e^{x_1}, e^{x_2} \big) \ P_j\big( e^{x_1}, e^{x_2} \big)  + \\
& \nonumber + \left( u_0(i) e^{-x_1 - x_2} P_{i+2} \big( e^{x_1}, e^{x_2} \big) + u_1(i) P_{i} \big( e^{x_1}, e^{x_2} \big) + u_2(i) e^{x_1 + x_2} P_{i-2} \big( e^{x_1}, e^{x_2} \big) \right) \ P_j\big( e^{x_1}, e^{x_2} \big) p  + \\
& \nonumber + \left( c_0(j) e^{-x_1 - x_2} P_{j+2} \big( e^{x_1}, e^{x_2} \big) + c_1(j) P_{j} \big( e^{x_1}, e^{x_2} \big) + c_2(j) e^{x_1 + x_2} P_{j-2} \big( e^{x_1}, e^{x_2} \big) \right) \ P_i\big( e^{x_1}, e^{x_2} \big) p + \\
& \left( - \dfrac{(1-t)(t+q)}{t(1-q)} \left(e^{x_1 - x_2} + e^{x_2 - x_1}\right) \right) P_i\big( e^{x_1}, e^{x_2} \big) \ P_j\big( e^{x_1}, e^{x_2} \big) p + O(p^2)
\end{align}
\smallskip\\
Using the Pieri rule for Macdonald polynomials, we compute

\begin{align}
& \nonumber \left( - \dfrac{(1-t)(t+q)}{t(1-q)} \left(e^{x_1 - x_2} + e^{x_2 - x_1}\right) \right) P_i\big( e^{x_1}, e^{x_2} \big) = \\
& v_0(i) e^{-x_1 - x_2} P_{i+2} \big( e^{x_1}, e^{x_2} \big) + v_1(i) P_{i} \big( e^{x_1}, e^{x_2} \big) + v_2(i) e^{x_1 + x_2} P_{i-2} \big( e^{x_1}, e^{x_2} \big) 
\end{align}
\smallskip\\
with
\begin{align}
v_0(j) = - \dfrac{(1-t)(t+q)}{t(1-q)} 
\end{align}
\begin{align}
v_1(j) = -\dfrac{q^{j-1}(1-t)^2(1+q)(t^2-q^2)}{t(1-q)(1-tq^{j-1})(1-tq^{j+1})}
\end{align}
\begin{align}
v_2(j) = -\dfrac{(1-t)(t+q)(1-q^j) (1-q^{j-1}) (1+t q^{j-2}) (1-t^2 q^{j-1}) (1-t^2 q^{j-2})}{t(1-q)(1-t^2 q^{2 j-4}) (1-t q^j) (1-t q^{j-1})^2}
\end{align}
\smallskip\\
Therefore, we obtain

\begin{align}
& \nonumber \dfrac{ P_i\big( e^{x_1}, e^{x_2} \big| p , p/s \big) \ P_j\big( e^{x_1}, e^{x_2} \big| p, s \big) \prod\limits_{m = 0}^{\beta-1} \theta_{p}(q^m e^{x_1-x_2} )\theta_{p}(q^m e^{x_2-x_1} ) }{\prod\limits_{m = 0}^{\beta-1} (1 - q^m e^{x_1-x_2} )(1 - q^m e^{x_2-x_1} )} = P_i\big( e^{x_1}, e^{x_2} \big) \ P_j\big( e^{x_1}, e^{x_2} \big)  + \\
& \nonumber + \left( u_0(i) e^{-x_1 - x_2} P_{i+2} \big( e^{x_1}, e^{x_2} \big) + u_1(i) P_{i} \big( e^{x_1}, e^{x_2} \big) + u_2(i) e^{x_1 + x_2} P_{i-2} \big( e^{x_1}, e^{x_2} \big) \right) \ P_j\big( e^{x_1}, e^{x_2} \big) p  + \\
& \nonumber + \left( c_0(j) e^{-x_1 - x_2} P_{j+2} \big( e^{x_1}, e^{x_2} \big) + c_1(j) P_{j} \big( e^{x_1}, e^{x_2} \big) + c_2(j) e^{x_1 + x_2} P_{j-2} \big( e^{x_1}, e^{x_2} \big) \right) \ P_i\big( e^{x_1}, e^{x_2} \big) p + \\
& \emph{} \hspace{-4ex} \left( v_0(i) e^{-x_1 - x_2} P_{i+2} \big( e^{x_1}, e^{x_2} \big) + v_1(i) P_{i} \big( e^{x_1}, e^{x_2} \big) + v_2(i) e^{x_1 + x_2} P_{i-2} \big( e^{x_1}, e^{x_2} \big) \right) \ P_j\big( e^{x_1}, e^{x_2} \big) p + O(p^2)
\label{SeriesIntegrand}
\end{align}
\smallskip\\
Using the ordinary CMM identities for Macdonald polynomials

\begin{align}
\nonumber & \dfrac{1}{2 \pi g} \int\limits_{-\infty}^{\infty} e^{-\frac{x_1^2}{2 g}} dx_1 \int\limits_{-\infty}^{\infty} e^{-\frac{x_2^2}{2 g}} dx_2 \ P_i\big( e^{x_1}, e^{x_2} \big) \ P_j\big( e^{x_1}, e^{x_2} \big) \prod\limits_{m = 0}^{\beta-1} (1 - q^m e^{x_1-x_2} )(1 - q^m e^{x_2-x_1} ) \\
& = q^{\frac{i(i+\beta) + ij + j(j+\beta)}{2}} P_i\big( t^{\frac{1}{2}} , t^{\frac{-1}{2}} \big) \ P_j\big( t^{\frac{1}{2}} q^{\frac{i}{2}} , t^{\frac{-1}{2}} q^{\frac{-i}{2}} \big) \cdot Z(g,\beta)
\end{align}
\begin{align}
\nonumber & \dfrac{1}{2 \pi g} \int\limits_{-\infty}^{\infty} e^{-\frac{x_1^2}{2 g}} dx_1 \int\limits_{-\infty}^{\infty} e^{-\frac{x_2^2}{2 g}} dx_2 \ e^{x_1+x_2} P_i\big( e^{x_1}, e^{x_2} \big) \ P_j\big( e^{x_1}, e^{x_2} \big) \prod\limits_{m = 0}^{\beta-1} (1 - q^m e^{x_1-x_2} )(1 - q^m e^{x_2-x_1} ) \\
& = q^{\frac{i(i+\beta) + ij + j(j+\beta)}{2} + i + j + 1} P_i\big( t^{\frac{1}{2}} , t^{\frac{-1}{2}} \big) \ P_j\big( t^{\frac{1}{2}} q^{\frac{i}{2}} , t^{\frac{-1}{2}} q^{\frac{-i}{2}} \big) \cdot Z(g,\beta)
\end{align}
\begin{align}
\nonumber & \dfrac{1}{2 \pi g} \int\limits_{-\infty}^{\infty} e^{-\frac{x_1^2}{2 g}} dx_1 \int\limits_{-\infty}^{\infty} e^{-\frac{x_2^2}{2 g}} dx_2 \ e^{-x_1-x_2} P_i\big( e^{x_1}, e^{x_2} \big) \ P_j\big( e^{x_1}, e^{x_2} \big) \prod\limits_{m = 0}^{\beta-1} (1 - q^m e^{x_1-x_2} )(1 - q^m e^{x_2-x_1} ) \\
& = q^{\frac{i(i+\beta) + ij + j(j+\beta)}{2} - i - j + 1} P_i\big( t^{\frac{1}{2}} , t^{\frac{-1}{2}} \big) \ P_j\big( t^{\frac{1}{2}} q^{\frac{i}{2}} , t^{\frac{-1}{2}} q^{\frac{-i}{2}} \big) \cdot Z(g,\beta)
\end{align}
\smallskip\\
we can integrate (\ref{SeriesIntegrand}) to obtain the l.h.s. of the elliptic CMM identity, in the form

{\fontsize{8pt}{0pt}{\begin{align}
\nonumber & \mbox{lhs} = \dfrac{q^{\frac{-i(i+\beta) - ij - j(j+\beta)}{2}} }{2 \pi g Z(g,\beta)} \int\limits_{-\infty}^{\infty} e^{-\frac{x_1^2}{2 g}} dx_1 \int\limits_{-\infty}^{\infty} e^{-\frac{x_2^2}{2 g}} dx_2 \ P_i\big( e^{x_1}, e^{x_2} \big| p , p/s \big) \ P_j\big( e^{x_1}, e^{x_2} \big| p, s \big) \prod\limits_{m = 0}^{\beta-1} \theta_{p}(q^m e^{x_1-x_2} )\theta_{p}(q^m e^{x_2-x_1} ) \\
& \nonumber = S_{i,j} + ( u_0(i) t q^{i+1} S_{i+2,j} + u_1(i) S_{i,j} + u_2(i) q^{-i+1} t^{-1} S_{i-2,j} ) p + ( c_0(j) t q^{j+1} S_{i,j+2} + c_1(j) S_{i,j} + c_2(j) q^{-j+1} t^{-1} S_{i,j-2} ) p + \\
& ( v_0(i) t q^{i+1} S_{i+2,j} + v_1(i) S_{i,j} + v_2(i) q^{-i+1} t^{-1} S_{i-2,j} ) p + O(p^2) 
\label{EquationLHS}
\end{align}}}
\smallskip\\
where we denoted

\begin{align}
S_{i,j} = P_i\big( t^{\frac{1}{2}} , t^{\frac{-1}{2}} \big) \ P_j\big( t^{\frac{1}{2}} q^{\frac{i}{2}} , t^{\frac{-1}{2}} q^{\frac{-i}{2}} \big)
\end{align}
\smallskip\\
What we need to prove is that expression (\ref{EquationLHS}) is equal to the r.h.s. of the elliptic CMM identity,

\begin{align}
\nonumber & \mbox{rhs} = P_i\big( t^{\frac{1}{2}} , t^{\frac{-1}{2}} \big| 0, 0 \big) \ P_j\big( t^{\frac{1}{2}} q^{\frac{i}{2}} , t^{\frac{-1}{2}} q^{\frac{-i}{2}} \big| 0, 0\big) \ \dfrac{ P_j\big( t^{\frac{1}{2}} q^{\frac{i}{2}} , t^{\frac{-1}{2}} q^{\frac{-i}{2}} \big| p/s, s \big) }{ P_j( t^{\frac{1}{2}} q^{\frac{i}{2}} , t^{\frac{-1}{2}} q^{\frac{-i}{2}} \big| p/s, 0 \big) } \cdot \dfrac{Z(g,\beta,s,p)}{Z(g,\beta)} = \\
& \nonumber S_{ij} \dfrac{Z(g,\beta,s,p)}{Z(g,\beta)} \left( 1 + w_0(j) \dfrac{P_{j+2} \big( t^{\frac{1}{2}} q^{\frac{i}{2}} , t^{\frac{-1}{2}} q^{\frac{-i}{2}} \big) }{P_{j} \big( t^{\frac{1}{2}} q^{\frac{i}{2}} , t^{\frac{-1}{2}} q^{\frac{-i}{2}} \big)} \frac{p}{s} + w_1(j) \frac{p}{s} + w_2(j) \dfrac{P_{j-2} \big( t^{\frac{1}{2}} q^{\frac{i}{2}} , t^{\frac{-1}{2}} q^{\frac{-i}{2}} \big)}{P_{j} \big( t^{\frac{1}{2}} q^{\frac{i}{2}} , t^{\frac{-1}{2}} q^{\frac{-i}{2}} \big)} \frac{p}{s} + O(p^2) \right) 
\end{align}
\smallskip\\
where we used Proposition 5 and $w_k(j) = c_k(j) - c_k(j)\big|_{s = 0}$. From the definition of $S_{i,j}$, this is the same as \linebreak

\begin{align}
\mbox{rhs} = \dfrac{Z(g,\beta,s,p)}{Z(g,\beta)} \left( S_{i,j} + w_0(j) S_{i,j+2} \frac{p}{s} + w_1(j) S_{i,j} \frac{p}{s} + w_2(j) S_{i,j-2} \frac{p}{s} + O(p^2) \right) 
\label{EquationRHS}
\end{align}
\smallskip\\
Finally, equating l.h.s. given by eq. (\ref{EquationLHS}) with r.h.s. given by eq. (\ref{EquationRHS}), what we need to prove is that there exists constant $\eta$ such that for all $i,j \geq 0$ we have

\begin{align}
& \nonumber u_0(i) t q^{i+1} S_{i+2,j} + u_1(i) S_{i,j} + u_2(i) q^{-i+1} t^{-1} S_{i-2,j} + \\
& \nonumber c_0(j) t q^{j+1} S_{i,j+2} + c_1(j) S_{i,j} + c_2(j) q^{-j+1} t^{-1} S_{i,j-2} + \\
& \nonumber v_0(i) t q^{i+1} S_{i+2,j} + v_1(i) S_{i,j} + v_2(i) q^{-i+1} t^{-1} S_{i-2,j} = \\
& w_0(j)/s S_{i,j+2} + w_1(j)/s S_{i,j} + w_2(j)/s S_{i,j-2} + \eta S_{i,j}
\label{FinalIdentity}
\end{align}
\smallskip\\
We are going to show that $\eta = 2(1-t)(q+t)/(1-q)/t$. To prove this identity, we apply the definition of $S_{i,j}$ and use the well-known product formula for Macdonald polynomials evaluated at the special point, 

\begin{align}
P_i\big( t^{\frac{1}{2}} , t^{\frac{-1}{2}} \big) = t^{-i/2} \dfrac{1-t q^i}{1-t} \prod\limits_{m = 0}^{i-1} \dfrac{ 1-t^2 q^m}{1-t q^{m+1}}
\end{align}
\smallskip\\
which implies

\begin{align}
\dfrac{P_{i+2}\big( t^{\frac{1}{2}} , t^{\frac{-1}{2}} \big)}{P_{i}\big( t^{\frac{1}{2}} , t^{\frac{-1}{2}} \big)} = \dfrac{(1-t^2 q^i) (1-t^2 q^{i+1})}{t (1-t q^i) (1-t q^{i+1})}, \ \ \ \ \ \dfrac{P_{i-2}\big( t^{\frac{1}{2}} , t^{\frac{-1}{2}} \big)}{P_{i}\big( t^{\frac{1}{2}} , t^{\frac{-1}{2}} \big)} = \dfrac{t (1-t q^{i-1}) (1-t q^{i-2})}{ (1-t^2 q^{i-1}) (1-t^2 q^{i-2})}
\end{align}
\smallskip\\
Dividing both sides of eq. (\ref{FinalIdentity}) by $P_i\big( t^{\frac{1}{2}} , t^{\frac{-1}{2}} \big)$, we obtain

\begin{align}
& \nonumber \alpha_0(j) P_{j+2}(z, z^{-1}) + \alpha_1(i) P_{j}(qz, q^{-1}z^{-1}) + \\
& \alpha_2(i) P_{j}(q^{-1}z, qz^{-1}) + \alpha_3(i,j) P_{j}(z, z^{-1}) + \alpha_4(j) P_{j-2}(z, z^{-1}) = 0
\label{FinalIdentity3}
\end{align}
\smallskip\\
where we denoted $z = t^{1/2} q^{i/2}$. The coefficients here are expressed as

\begin{align}
\alpha_0(j) = c_0(j) t q^{j+1} - w_0(j)/s = \dfrac{q^{j+1} t(1-t)}{(1-q)} 
\end{align}
\begin{align}
\alpha_1(i) =  \big( u_0(i) + v_0(i) \big) t q^{i+1} \dfrac{(1-t^2 q^i) (1-t^2 q^{i+1})}{t (1-t q^i) (1-t q^{i+1})} = \dfrac{ - q^{1+i} (1 - t^2 q^{i}) (1 - t) (1 - t^2 q^{i+1})}{(1 - q) (1 - t q^i) (1 - t q^{i+1})}
\end{align}
\begin{align}
\alpha_2(i) =  \big( u_2(i) + v_2(i) \big) q^{-i+1}  t^{-1} \dfrac{t (1-t q^{i-1}) (1-t q^{i-2})}{ (1-t^2 q^{i-1}) (1-t^2 q^{i-2})} = \dfrac{- q^{-i} (1 - t) q (1 - q^{i}) (1 - q^{i-1})}{(1 - t q^{i-1}) (1 - q) (1 - t q^{i})}
\end{align}
\begin{align}
\hspace{-3ex} \alpha_3(i,j) = u_1(i) + v_1(i) + c_1(j) - w_1(j)/s - \eta = \dfrac{(1 - t)^2 (q^i - q^j) (1 - t^2 q^{i+j}) (1 + q) (q -t)}{q(1 - q^{i+1} t) (1 - q) (1 - tq^{i-1}) (1 - t q^{j+1}) (1 - q^{j-1} t)}
\end{align}
\begin{align}
\alpha_4(j) = c_2(j) q^{-j+1}  t^{-1} - w_2(j)/s = \dfrac{q^{1-j} (1 - q^j) (1 - q^{j-1}) (1 - t) (1 - t^2 q^{j-1}) (1 - t^2 q^{j-2})}{t (1 - q) (1 - t q^j)(1 - t q^{j-1})^2 (1 - t q^{j-2})}
\end{align}
\smallskip\\
To prove eq. (\ref{FinalIdentity3}), let us introduce operators

\begin{align}
{\cal O}_A f(z) = \dfrac{1 - t z^2}{1 - z^2} f( q^{\frac{1}{2}} z) + \dfrac{1 - t z^{-2}}{1 - z^{-2}} f( q^{\frac{-1}{2}} z), \ \ \ \ \ {\cal O}_B f(z) = (z + 1/z) f(z)
\label{OAOB}
\end{align}
and

\begin{align}
{\cal O}_C = {\cal O}_A {\cal O}_B - q^{\frac{-1}{2}} {\cal O}_B {\cal O}_A  
\label{OC}
\end{align}
\smallskip\\
We will use the known result,

\begin{align}
{\cal O}_C P_j(z, z^{-1}) = q^{\frac{j-1}{2}} t ( q - 1) P_{j+1}(z, z^{-1}) - \dfrac{q^{-\frac{j+1}{2}} (1-q) (1-q^{j}) (1-t^2 q^{j-1})}{(1 - t q^j) (1 - t q^{j-1})} P_{j-1}(z, z^{-1})
\label{OCPieri}
\end{align}
\smallskip\\
which is a corollary of Pieri formulas for Macdonald polynomials. Then one the one hand, from eq. (\ref{OCPieri}),

\begin{align}
-\dfrac{(1-t) q^{3/2}}{t (1-q)^3 } {\cal O}^2_C P_j(z, z^{-1}) = - \alpha_0(j) P_{j+2}(z,z^{-1}) + \alpha_5(j) P_{j}(z,z^{-1}) - \alpha_4(j) P_{j-2}(z,z^{-1})
\label{Eq1}
\end{align}
\smallskip\\
while on the other hand, from eqs. (\ref{OAOB}) and (\ref{OC}),

\begin{align}
-\dfrac{(1-t) q^{3/2}}{t (1-q)^3 } {\cal O}^2_C P_j(z, z^{-1}) = \alpha_1(i) P_{j}(qz, q^{-1}z^{-1}) + \alpha_6(i) P_{j}(z,z^{-1}) + \alpha_2(i) P_{j}(q^{-1}z, qz^{-1})
\label{Eq2}
\end{align}
\smallskip\\
with

\begin{align}
\alpha_5(j) = \dfrac{(1 - t) (2 q^{2j + 1} t^2 - q^{2 + j} t - q^{1+j} t^2 - q^{2+j} + 2 q^{1+j} t - q^j t^2 - q^{1+j} - q^j t + 2 q)}{q (1 - tq^{j-1}) (1 - t q^{j+1} ) (1 - q)}
\end{align}
\begin{align}
\alpha_6(i) = \dfrac{-(1 - t) (-2 q t z^4 + q^2 t z^2 + q t^2 z^2 + q^2 z^2 - 2 q t z^2 + t^2 z^2 + q z^2 + t z^2 - 2 q t)}{t (1 - q) (q - z^2) (1 - q z^2)}
\end{align}
\smallskip\\
and we remind that $z = t^{1/2} q^{i/2}$. Substracting eq.(\ref{Eq1}) from eq.(\ref{Eq2}) and observing that

\begin{align}
\alpha_3(i,j) = \alpha_6(i) - \alpha_5(j)
\end{align}
\smallskip\\
completes the proof of eq. (\ref{FinalIdentity3}) and therefore, the proof of the theorem.

\end{proof}

\section{Conclusion}

In this paper we suggested an elliptic generalization of CMM integral identities, where trigonometric ($q$-deformed) Vandermonde is replaced by an elliptic version -- a product of theta functions -- and Macdonald polynomials are replaced by Shiraishi functions. This can be regarded as a development in the direction of the paper \cite{AS23} because we continue to demonstrate that Shiraishi functions are appropriate coordinate-elliptic generalizations of Macdonald polynomials and enjoy many of the properties of the original Macdonald polynomials, properly generalized. In \cite{AS23} generalization of the $SL(2,{\mathbb Z})$ modular $S$ and $T$ matrices was considered. Here, instead of finite-dimensional representations, we consider the continuous case where CMM identities play a similar role.

It is interesting to note that, just like in \cite{AS23}, the identities survive elliptic generalization provided that appropriate transformations are applied to the elliptic parameters $(p,s)$, such as $s \rightarrow s/p$ and $p \rightarrow p/s$. These transformations are typically linear transformations of the logarithms of the parameters, therefore they are multiplicative transformations of parameters theirselves. It is expected that this group of transformations can be further enriched to $SL(3,{\mathbb Z})$ transformations \cite{AS23}, however, the precise formulas have not been presented yet.
 
We are hopeful that results of current paper could attract attention to the problem of construction of elliptic refined Chern-Simons theory, and associated elliptic knot/link invariants, existence of which was discussed in \cite{AS23}. It appears that Shiraishi functions do possess many if not all of the required properties for this extension of refined Chern-Simons theory to exist. We hope to return to this subject in later work.
 
\section{Acknowledgments}

We would like to thank A. Morozov, A. Mironov, A. Popolitov, A. Anokhina, S.Arthamonov, and J. Shiraishi for valuable discussions. The research of S. R. Shakirov was supported in part by the State Assignment of the Institute for Information Transmission Problems of the RAS.

\end{document}